\documentclass[12pt,a4paper]{amsart}
\usepackage[centertags]{amsmath}
\usepackage{amsfonts}
\usepackage{amssymb}
\usepackage{amsthm}
\usepackage{newlfont}

\topmargin = 0cm
    \oddsidemargin=-1cm
    \evensidemargin=-1cm
    \setlength{\textheight}{23cm}
    \setlength{\textwidth}{16cm}

\usepackage{amsbsy,amscd,amsmath,amsopn,amssymb,amsthm}
\usepackage{color} 
\usepackage{color, graphics, epic, eepic}

\theoremstyle{definition}
\newtheorem{thm}{Theorem}[section]
\newtheorem{lem}[thm]{Lemma}
\newtheorem{prp}[thm]{Proposition}

\newtheorem{cor}[thm]{Corollary}

\newtheorem{exa}[thm]{Example}

\newcommand{\Tr}{\operatorname{Tr}}

\begin{document}
\title[Characterization of the monotonicity]
{Characterization of the monotonicity by the inequality}


\author[D.~T.~Hoa]{Dinh Trung Hoa}
\thanks{}
\address{{
Center of Research and Development, Duy Tan University, K7/25 Quang Trung, Danang, Vietnam}}
\email{dinhtrunghoa@duytan.edu.vn}

\author[H. Osaka]{HIROYUKI OSAKA$^{a}$} 
\date{15, July, 2012}
\thanks{}
\address{Department of Mathematical Sciences, Ritsumeikan
University, Kusatsu, Shiga 525-8577, Japan}

\email{osaka@se.ritsumei.ac.jp}

\author[J. Tomiyama]{JUN TOMIYAMA}
\thanks{}
\address{Prof. Emeritus of Tokyo Metropolitan University,
201 11-10 Nakane 1-chome, Meguro-ku, Tokyo, Japan}

\email{juntomi@med.email.ne.jp}




\keywords{Operator monotonicity, trace, generalized Powers-St\o rmer's inequality, positive
functional} 
\subjclass[2000]{46L30, 15A45}

\footnote{$^A$Research partially supported by the JSPS grant for 
Scientic Research No. 20540220.}
\begin{abstract}
Let $\varphi$ be a normal state on the algebra $B(H)$ of all bounded operators on a Hilbert space $H$, $f$ a strictly positive, continuous function on $(0, \infty)$, 
and let $g$ be a function on $(0, \infty)$ defined by $g(t) = \frac{t}{f(t)}$.
We will give characterizations of matrix and operator monotonicity by the following generalized Powers-St\o rmer inequality: 

$$
\varphi(A + B) - \varphi(|A - B|) \leq 2\varphi(f(A)^\frac{1}{2}g(B)f(A)^\frac{1}{2}),
$$
whenever $A, B$ are positive invertible operators in $B(H).$
\end{abstract}
\maketitle

\section{Introduction}
Throughout the paper, $M_n$ stands for the algebra of all $n\times n$ matrices, $M_n^+$ denote the set of positive semi-definite matrices. We call a function $f$ \textit{matrix convex of order $n$} or 
\textit{$n$-convex} in short (resp. \textit{matrix concave of order} $n$ or \textit{$n$-concave})
whenever the inequality 
$$
f(\lambda A + (1 - \lambda)B) \leq \lambda f(A) + (1 - \lambda) f(B), \ 
 \lambda \in [0, 1]
 $$ 
(resp. $
f(\lambda A + (1 - \lambda)B) \geq \lambda f(A) + (1 - \lambda) f(B), \ 
 \lambda \in [0, 1]
$) 
holds for every pair of selfadjoint matrices $A, B \in M_n$ such that 
all eigenvalues of $A$ and $B$ are contained in $I$.
\textit{Matrix monotone functions} on $I$ are 
similarly defined as
the inequality 
$$
A \leq B \Longrightarrow f(A) \leq f(B)
 $$
for any pair of selfadjoint matrices $A, B \in M_n$ such that 
$A \leq B$ and all eigenvalues of $A$ and $B$ are contained in $I$.
We call a function $f$ \textit{operator convex} (resp. \textit{operator concave}) if for each $k \in \mathbb{N}$, $f$ is 
$k$-convex (resp. $k$-concave) and \textit{operator monotone} if for each $k \in \mathbb{N}$ $f$ is 
$k$-monotone.

Let $n \in \mathbb{N}$ and $f: [0, \alpha) \rightarrow \mathbb{R}$. 
In \cite{OJ} the second and the third author discussed about the following 3 assertions 
at each level $n$ among them in order to see clear insight of 
the double piling structure of matrix monotone functions and of matrix convex functions:
\begin{enumerate}
\item[(i)] $f(0)\leq 0$ and $f$ is $n$-convex in $[0,\alpha)$,
\item[(ii)] For each matrix $a$ with its spectrum in $[0,\alpha)$ and a contraction $c$ in the matrix algebra
$M_n$,
 \[f(c^{\star}a c)\leq c^{\star}f(a)c,\]
\item[(iii)] The function $\frac{f(t)}{t}$ $(= g(t))$ is $n$-monotone in $(0,\alpha)$.
\end{enumerate}
It was shown in \cite{OJ} that
$$
(i)_{n+1} \prec (ii)_n \sim (iii)_n \prec  (i)_{[\frac{n}{2}]},
$$
where denotion $(A)_m \prec (B)_n$ means that ``if $(A)$ holds for the matrix algebra $M_m$, 
then $(B)$ holds for the matrix algebra $M_n$''.

In this article, using an idea in \cite{HOT} we can get the concave version of the above observation. 
Namely, for $n \in \mathbb{N}$ and $f: [0, \alpha) \rightarrow \mathbb{R}$ we consider the following assertions:
\begin{enumerate}
\item[(iv)] $f(0)\geq 0$ and $f$ is $n$-concave in $[0,\alpha)$,
\item[(v)] For each matrix $a$ with spectrum in $[0,\alpha)$ and a contraction $c$ in the matrix algebra
$M_n$,
 \[f(c^{\star}a c)\geq c^{\star}f(a)c,\]
\item[(vi)] The function $\frac{t}{f(t)}$ is $n$-monotone in $(0,\alpha)$.
\end{enumerate}

We will show that 
$$
(iv)_{n+1} \prec (v)_n \sim (vi)_n \prec  (iv)_{[\frac{n}{2}]}.
$$

As an application we investigate the generalized Powers-St\o rmer inequality 
from the point of matrix functions, which was introduced in \cite{HOT}. Let $\varphi$ be a normal state on the algebra $B(H)$ of all bounded operators on a Hilbert space $H$, 
$f$ be a strictly positive, continuous function on $(0, \infty)$, 
and let $g$ be a function on $(0, \infty)$ defined by $g(t) = \frac{t}{f(t)}$.
We will consider the following inequality 
$$
\varphi(A + B) - \varphi(|A - B|) \leq 2\varphi(f(A)^\frac{1}{2}g(B)f(A)^\frac{1}{2}),
$$
where $A, B$ are positive invertible operators in $B(H).$

It will be shown that:
\begin{enumerate}
\item If the inequality holds true for any positive invertible $A$, $B$, 
then the function $g$ is operator monotone.
\item When $\dim H = n < \infty$, if $\varphi$ is canonical trace and $f$ is $(n + 1)$-concave, then the inequality holds. 
\item When $\dim H = n < \infty$, if the inequality holds, then the state $\varphi$ has the trace property if and only if the function $g$ satisfies the condition
$$\inf_{\lambda > \mu}
\dfrac{\sqrt{g'(\lambda)g'(\mu)}}
{\dfrac{g(\lambda) - g(\mu)}{\lambda - \mu}} =0.$$
\end{enumerate}

\section{Hansen-Pedersen's inequality for matrix functions}

For a long time it has been known the following equivalency.
When $f$ is strictly positive, continuous function on $(0, \infty)$, the followings  are equivalent (\cite[2.6. Corollary]{Hansen-Perd-annal}):

\begin{enumerate}
\item $f$ is operator concave.
\item $\dfrac{t}{f(t)}$ is operator monotone
\end{enumerate}

\vskip 3mm

The following result is the matrix function versions of the above observation.

\vskip 3mm

\begin{thm}\label{thm:Hansen-Pedersen}
Let $n \in \mathbb{N}$ and $f \colon [0, \alpha) \rightarrow \mathbb{R}$ be a continuous function for some $\alpha > 0$ 
such that $0 \notin f([0, \alpha))$.
Let us consider the following assertions:
\begin{enumerate}
\item[$(4)_n$] $f$ is $n$-concave with $f(0) \geq 0$.
\item[$(5)_n$] For all operators $A \in M_n$ with its spectrum in $[0, \alpha)$ 
and all contraction $C$
$$
f(C^*AC) \geq C^*f(A)C.
$$
\item[$(6)_n$] $g(t) = \frac{t}{f(t)}$ is $n$-monotone on $(0, \alpha)$.
\end{enumerate}

Then we have
$$
(4)_{n+1} \prec (5)_n \sim (6)_n \prec  (4)_{[\frac{n}{2}]}.
$$
\end{thm}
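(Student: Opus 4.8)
The plan is to establish the cycle of implications $(4)_{n+1} \prec (5)_n$, $(5)_n \sim (6)_n$, and $(5)_n \prec (4)_{[n/2]}$, mirroring the convex case of \cite{OJ} but dualizing every inequality, and replacing $f(t)/t$ by $t/f(t)$. Throughout, the standing hypothesis $0 \notin f([0,\alpha))$ together with $f(0)\ge 0$ in $(4)_n$ forces $f>0$ on $[0,\alpha)$ by continuity, so $g(t)=t/f(t)$ is well defined and nonnegative, vanishing only at $t=0$; this positivity is what lets us pass between the two functions.

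For $(4)_{n+1} \prec (5)_n$: assuming $f$ is $(n+1)$-concave with $f(0)\ge 0$, I would use the standard dilation trick. Given $A\in M_n$ with spectrum in $[0,\alpha)$ and a contraction $C\in M_n$, build the $(n+1)$-block (or $2n$-block, then note $[n/2]$-type bookkeeping is only needed in the other direction) selfadjoint unitary/isometry dilation $U$ so that $C^*AC$ appears as a compression $P\,\tilde A\,P$ where $\tilde A = U^*(A\oplus 0)U$ has spectrum in $[0,\alpha)$ and $P$ is a projection. Concavity of order $n+1$ gives $f(P\tilde A P + (1-P)\cdot 0\cdot(1-P)) \ge$ the corresponding convex combination; more precisely one writes $P\tilde A P$ as $\tfrac12(\tilde A) + \tfrac12(V^*\tilde A V)$ for a symmetry $V$ commuting appropriately, so $f(P\tilde AP)\ge \tfrac12 f(\tilde A)+\tfrac12 V^*f(\tilde A)V = P f(\tilde A)P$, and restricting to the corner recovers $f(C^*AC)\ge C^*f(A)C$. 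The condition $f(0)\ge 0$ is exactly what is needed to discard the zero block with the correct inequality direction.

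For $(5)_n \sim (6)_n$: this is the heart of the matter and I expect the main obstacle to lie here, in handling the non-invertibility at $0$ and the bookkeeping of matrix sizes. In one direction, assume $(5)_n$; to show $g$ is $n$-monotone, take $0\le A\le B$ in $M_n$ with spectra in $(0,\alpha)$, write $A = C^*BC$ is false in general, so instead use that $A\le B$ lets us write $A^{1/2} = B^{1/2}C$ with $C$ a contraction, i.e. $A = C^*BC$ only when... — the correct move is the one from \cite{OJ}: apply $(5)$ not to $f$ but exploit the identity relating $g(C^*BC)$ and $C^*g(B)C$ through $f$, using $g(t)f(t)=t$. Concretely, from $f(C^*BC)\ge C^*f(B)C$ and positivity of $f$ one derives, after multiplying by the appropriate square roots and using $A=C^*BC$, the inequality $g(A)\le g(B)$; the delicate point is that $t/f(t)$ being the relevant quantity means we must invert $f(\cdot)$, which is legitimate precisely because $f$ is bounded below away from $0$ on the compact spectra involved. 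The converse $(6)_n \Rightarrow (5)_n$ runs symmetrically: $n$-monotonicity of $g$ yields $g(C^*BC)\le C^*g(B)C$ by the already-established $(6)\Rightarrow(5)$-type step applied to $g$ in place of $f$ (note $t/g(t)=f(t)$, so the roles are symmetric), and then translating back through $g\cdot f = \mathrm{id}$ gives $f(C^*AC)\ge C^*f(A)C$.

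For $(5)_n \prec (4)_{[n/2]}$: assuming $(5)_n$, I would recover concavity of order $m=[n/2]$ by the classical averaging argument. Given selfadjoint $A,B\in M_m$ with spectra in $[0,\alpha)$ and $\lambda\in[0,1]$, form the block matrix $D = \mathrm{diag}(A,B)\in M_{2m}\subseteq M_n$ and the contraction $C$ built from $\sqrt\lambda,\sqrt{1-\lambda}$ and a unitary, so that $C^*DC = \lambda A + (1-\lambda)B$ (placed in one corner) while $C^*f(D)C = \lambda f(A)+(1-\lambda)f(B)$. Then $(5)_n$ gives $f(\lambda A+(1-\lambda)B)\ge \lambda f(A)+(1-\lambda)f(B)$ directly, and evaluating $(5)_n$ with $A$ replaced by a matrix having a zero eigenvalue and $C$ a suitable partial isometry yields $f(0)\ge 0$. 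The factor $[n/2]$ enters because the block construction consumes two copies of $M_m$ inside $M_n$; the argument is otherwise a routine transcription of the convex case with all inequalities reversed.
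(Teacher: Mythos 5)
Your overall plan (dualize the convex case) is the right idea, but two of your three steps have genuine gaps, and they are precisely the places where the paper does not argue from scratch but leans on the nontrivial results of \cite{OJ}. First, for $(4)_{n+1}\prec (5)_n$ your dilation argument cannot give the stated index: the standard compression trick writes $C^*AC\oplus(\cdots)$ as an average of two unitary conjugates of $A\oplus 0$ inside $M_{2n}$, so it consumes concavity of order $2n$, not $n+1$; there is no ``$(n+1)$-block'' version of that construction, and your parenthetical that the $2n$-versus-$n$ bookkeeping ``is only needed in the other direction'' is exactly backwards. The whole content of this implication is that $(n+1)$-concavity already suffices, which is the refined implication $(i)_{n+1}\prec(ii)_n$ of \cite{OJ}; the paper simply applies that result to $-f$ (noting $-f(0)\le 0$ and $-f$ is $(n+1)$-convex). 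Your sketch proves only the weaker statement $(4)_{2n}\prec(5)_n$.

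Second, in $(5)_n\sim(6)_n$ the forward direction can indeed be done directly (and more cleanly than you state: for $0<A\le B$ with spectra in $(0,\alpha)$, $C=B^{-1/2}A^{1/2}$ \emph{is} a contraction with $A=C^*BC$, so your own hesitation there is unfounded; then $f(A)\ge C^*f(B)C$, inverting and multiplying by $A^{1/2}$ gives $g(A)\le g(B)$), but your converse is not a proof. The claimed symmetry between $f$ and $g$ breaks down ($g(0)=0$, so the standing hypothesis $0\notin f([0,\alpha))$ is not available for $g$), the inequality you assert, $g(C^*BC)\le C^*g(B)C$, has the wrong sign for a nonnegative $n$-monotone function (Hansen's inequality gives $\ge$), and in any case you invoke ``the already-established $(6)\Rightarrow(5)$-type step,'' i.e.\ the very implication being proved. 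The paper's route is different and does close this gap: it converts $(5)_n$ for $f$ into $(ii)_n$ for $-f$, uses the equivalence $(ii)_n\sim(iii)_n$ of \cite{OJ} to pass to $n$-monotonicity of $-f(t)/t$, and then moves between $-f(t)/t$ and $t/f(t)$ by composing with the operator monotone function $-1/t$ (and \cite[Lemma~2.2]{HOT} for the last step $(6)_n\prec(4)_{[\frac n2]}$). Your third step, deducing $(4)_{[\frac n2]}$ by the $2\times 2$ block averaging of $\mathrm{diag}(A,B)$ inside $M_n$ together with $C=0$ to get $f(0)\ge 0$, is fine (and, given the equivalence, an acceptable substitute for the paper's derivation from $(6)_n$), but the first implication and the converse of the equivalence need the \cite{OJ} machinery or an argument you have not supplied.
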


\vskip 3mm

\begin{proof}
The implication $(4)_{n+1} \prec (5)_n$:

Since $f$ is $(n + 1)$-concave, $-f$ is $(n + 1)$-convex. From \cite{OJ} we know that 
for an operator $A \in M_n$ with its spectrum in $[0, \alpha)$ and a contraction $C$  
$$
(-f)(C^*AC) \leq C^*(-f)(A)C,
$$
hence 
$$
f(C^*AC) \geq C^*f(A)C.
$$

The implication $(5)_{n} \sim (6)_n$:

Since $f(C^*AC) \geq C^*f(A)C$ for an operator $A \in M_n$
 with its spectrum in $[0, \alpha)$ and a contraction $C$, we know 
 that 
$$
(-f)(C^*AC) \leq C^*(-f)(A)C.
$$
Then, from \cite{OJ} we know that $\frac{-f(t)}{t}$ is $n$-monotone. 
Since the function $-\dfrac{1}{t}$ is operator monotone, $-\dfrac{1}{\dfrac{-f(t)}{t}}$ is $n$-monotone, 
that is, $\frac{t}{f(t)}$ is $n$-monotone.

Conversely, if $\frac{t}{f(t)}$ is $n$-monotone, then $-\frac{1}{\dfrac{t}{f(t)}} = \dfrac{(-f)(t)}{t}$ is 
$n$-monotone from the operator monotonicity of $-\dfrac{1}{t}$, hence we know in \cite{OJ} that for an operator $A \in M_n$
 with its spectrum in $[0, \alpha)$ and a contraction $C$
$$
(-f)(C^*AC) \leq C^*(-f)(A)C, 
$$
that is,
$$
f(C^*AC) \geq C^*f(A)C.
$$

The implication $(6)_{n} \prec (4)_{[\frac{n}{2}]}$:
Since $\frac{t}{f(t)}$ is $n$-monotone, the function $\frac{-f(t)}{t}$ is 
$n$-monotone by \cite[Lemma~2.2]{HOT}. Hence, $-f$ is $[\frac{n}{2}]$-convex by \cite{OJ}, that is, 
$f$ is $[\frac{n}{2}]$-concave.
\end{proof}

\section{Characterization of matrix monotonicity}

The following result was proved in \cite{HOT} under the condition that the function $f$ 
is $2n$-monotone. But using the concavity  
we 
will show 
that the condition of $f$ is weakened.
Note that the $2n$-monotonicity of a function $f$ on $[0, \infty)$ implies the 
$n$-concavity of $f$ by \cite[Theorem~V.2.5]{RB}.

\vskip 3mm

\begin{thm}\label{thm:Powers-Stormer}
Let $\Tr$ be the canonical trace on $M_n$ and $f$ be a $(n + 1)$-concave 
function on $[0, \infty)$ such that $f((0, \infty)) \subset (0, \infty)$. 
Then for any pair of positive matrices $A, B \in M_n$  
$$
\Tr(A) + \Tr(B) - \Tr(|A - B|) 
\leq 2\Tr(f(A)^{\frac{1}{2}}g(B)f(A)^{\frac{1}{2}}),
$$
where 
$g(t) 
= 
\left\{\begin{array}{cc}
\frac{t}{f(t)}&(t \in (0, \infty))\\
0& (t = 0)
\end{array}
\right.
$.
\end{thm}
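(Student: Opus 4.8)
The plan is to reduce the matrix inequality to a scalar inequality by a clever choice of contraction, mirroring the strategy used for the original Powers--St\o rmer inequality in \cite{HOT} but now exploiting $(n+1)$-concavity via Theorem~\ref{thm:Hansen-Pedersen} instead of $2n$-monotonicity. First I would recall that for positive matrices $A, B$ one has $\Tr(A) + \Tr(B) - \Tr(|A-B|) = 2\Tr\big((A-B)_-\big) + 2\min$-type quantities; more precisely, writing $A - B = (A-B)_+ - (A-B)_-$, we get $\Tr(|A-B|) = \Tr((A-B)_+) + \Tr((A-B)_-)$ and $\Tr(A-B) = \Tr((A-B)_+) - \Tr((A-B)_-)$, so that $\Tr(A)+\Tr(B)-\Tr(|A-B|) = 2\Tr(B) - 2\Tr((A-B)_-) = 2\Tr(B) - 2\Tr\big((B-A)_+\big)$. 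Thus the claim is equivalent to
\[
\Tr(B) - \Tr\big((B-A)_+\big) \leq \Tr\big(f(A)^{1/2} g(B) f(A)^{1/2}\big).
\]

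Next I would diagonalize $B = \sum_j \mu_j Q_j$ with spectral projections $Q_j$ and eigenvalues $\mu_j \geq 0$. Since $g$ is continuous with $g(0)=0$, $g(B) = \sum_j g(\mu_j) Q_j$, and because $f$ is $(n+1)$-concave with $f > 0$ on $(0,\infty)$, assertion $(6)_n$ of Theorem~\ref{thm:Hansen-Pedersen} tells us $g$ is $n$-monotone; in particular $g$ is concave and nonnegative on $[0,\infty)$, hence subadditive-like estimates are available. The key device is to apply assertion $(5)_n$ — namely $f(C^*XC) \geq C^* f(X) C$ for contractions $C$ — or rather its companion for $g$: since $g(t) = t/f(t)$ and, dually, one can write an inequality of the form $g(C^* X C) \le C^* g(X) C$ type statements do \emph{not} hold in general, so instead I would work directly with the variational characterization $\Tr\big((B-A)_+\big) = \max\{\Tr(P(B-A)P) : 0 \le P \le I\}$, where the max is attained at the spectral projection $P$ of $B - A$ onto $[0,\infty)$. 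Fixing that optimal $P$, it suffices to show
\[
\Tr(B) - \Tr(PBP) + \Tr(PAP) \leq \Tr\big(f(A)^{1/2} g(B) f(A)^{1/2}\big),
\]
i.e. $\Tr\big((I-P)B(I-P)\big) + \Tr(PAP) \leq \Tr\big(f(A) g(B)\big)$ using cyclicity of the trace (here $\Tr(f(A)^{1/2}g(B)f(A)^{1/2}) = \Tr(g(B)f(A))$, both sides being traces of positive operators when $g(B), f(A) \geq 0$).

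The heart of the argument is then a pointwise/sandwich estimate: I would write everything in a basis adapted to $B$ and set $a_{jj} = \langle e_j, A e_j\rangle$ for $B$-eigenvectors $e_j$, reducing the right-hand side to $\sum_j g(\mu_j) \langle e_j, f(A) e_j\rangle \ge \sum_j g(\mu_j) f(a_{jj})$ by Jensen's operator inequality (concavity of $f$ gives $\langle e_j, f(A) e_j \rangle \ge f(\langle e_j, A e_j\rangle) = f(a_{jj})$, valid since $f$ is $n$-concave hence the Jensen inequality holds for vector states in dimension $n$). It remains to check the scalar inequality $\mu_j - (\mu_j - a_{jj})_+ \le g(\mu_j) f(a_{jj})$, i.e. $\min(\mu_j, a_{jj}) \le g(\mu_j) f(a_{jj}) = \mu_j f(a_{jj})/f(\mu_j)$; when $\mu_j \le a_{jj}$ this reads $f(\mu_j) \le f(a_{jj})$, and when $\mu_j \ge a_{jj}$ it reads $a_{jj} f(\mu_j) \le \mu_j f(a_{jj})$, i.e. $g$ is nondecreasing — both of which follow from $g$ being $n$-monotone (in particular $1$-monotone, hence scalar-monotone) together with the fact that $t \mapsto f(t) = t/g(t)$ inherits monotonicity on the relevant range. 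The main obstacle I anticipate is the bookkeeping that connects the optimal projection $P$ to the diagonal entries $a_{jj}$ cleanly: the naive reduction above implicitly assumed $P$ is diagonal in the $B$-eigenbasis, which need not hold, so I would instead invoke the pinching/majorization inequality $\sum_j g(\mu_j) \langle e_j, f(A) e_j\rangle \ge \Tr(g(B) f(A))$ is false in the wrong direction — rather, one uses that $\Tr(g(B)f(A))$ is unitarily-invariantly controlled below by a rearrangement, and the correct tool is the trace inequality $\Tr(XY) \ge \sum_j \lambda_j^{\downarrow}(X)\lambda_j^{\uparrow}(Y)$ for positive $X, Y$. Combining this with the concavity-based lower bound $f(A) \ge$ its pinching, and the two scalar cases above, closes the estimate; making these majorization steps compatible with the extraction of $(B-A)_+$ is where the care is needed, and it is exactly the place where the hypothesis is upgraded from $2n$-monotone (as in \cite{HOT}) to the weaker $(n+1)$-concave via Theorem~\ref{thm:Hansen-Pedersen}.
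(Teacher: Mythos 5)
Your reduction of the left-hand side is fine: indeed $\Tr(A)+\Tr(B)-\Tr|A-B| = 2\bigl(\Tr(B)-\Tr((B-A)_+)\bigr)$, and in the $B$-eigenbasis $\Tr((B-A)_+)\ge\sum_j(\mu_j-a_{jj})_+$, so the left side is at most $2\sum_j\min(\mu_j,a_{jj})$. But the heart of your argument breaks at two points. First, your Jensen step is reversed: for a \emph{concave} $f$ and a unit vector $e_j$ one has $\langle e_j, f(A)e_j\rangle \le f(\langle e_j, Ae_j\rangle)$, not $\ge$ (assertion $(5)_n$ of Theorem~\ref{thm:Hansen-Pedersen}, specialized to a rank-one compression, says exactly $f(\langle Ae,e\rangle)\ge\langle f(A)e,e\rangle$); so the claimed lower bound $\Tr(f(A)g(B))\ge\sum_j g(\mu_j)f(a_{jj})$ is unjustified. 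Second, and fatally, the intermediate inequality your chain needs, $\sum_j\min(\mu_j,a_{jj})\le\Tr(f(A)g(B))$, is simply false. Take $n=2$ and the classical case $f(t)=g(t)=\sqrt t$, with $B=\mathrm{diag}(1,1+\epsilon)$ (so the $B$-eigenbasis is forced to be the standard one) and $A=2|v\rangle\langle v|$, $v=\tfrac{1}{\sqrt2}(1,1)$. Then $a_{11}=a_{22}=1$, so $\sum_j\min(\mu_j,a_{jj})=2$, while $\Tr(f(A)g(B))=\sqrt2\,\langle v,\sqrt B\,v\rangle\to\sqrt2<2$ as $\epsilon\to0$, even though the theorem's inequality holds comfortably here (the true left side $\Tr(B)-\Tr((B-A)_+)$ is about $1$). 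So the slack introduced by passing to the diagonal of $A$ in the $B$-eigenbasis is too large, and the rearrangement inequality $\Tr(XY)\ge\sum_j\lambda_j^{\downarrow}(X)\lambda_j^{\uparrow}(Y)$ you invoke at the end cannot repair this chain, since it only lower-bounds the same quantity $\Tr(f(A)g(B))$ that is already smaller than your intermediate term; the patch is also never actually carried out, and the projection reduction and the diagonal reduction are conflated along the way.

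The structural reason your route cannot close is that it only uses \emph{scalar} monotonicity of $f$ and $g$ and scalar concavity of $f$; the matrix-level hypotheses never enter, yet they are where the content of the theorem lies. The paper's proof is a reduction, not a new estimate: by Theorem~\ref{thm:Hansen-Pedersen}, $(n+1)$-concavity of $f$ gives $n$-monotonicity of $g$, and a positive concave function on $[0,\infty)$ is $n$-monotone as well; one then repeats the argument of \cite[Theorem~2.1]{HOT} verbatim with these two matrix properties. In that argument the $n$-monotonicity of $g$ is used as a genuine operator inequality — for instance, when $0<A\le B$ it yields $g(A)\le g(B)$ and hence $A=f(A)^{1/2}g(A)f(A)^{1/2}\le f(A)^{1/2}g(B)f(A)^{1/2}$, and the general case is handled via the spectral projection of $B-A$ with similar operator comparisons — which is precisely the kind of step a diagonal/entrywise reduction cannot reach. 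To fix your write-up you would need to keep the projection decomposition and run the operator-monotonicity comparisons of \cite{HOT} rather than descend to diagonal entries.
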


\vskip 3mm

\begin{proof}
Since $f$ is $(n + 1)$-concave, we know that the function $g$ is $n$-monotone 
from Theorem~\ref{thm:Hansen-Pedersen}. 
Moreover, by a standard argument (see, for example, \cite[Theorem~V.2.5]{RB}) it is clear that the function $f$ is $(n + 1)$-monotone, and hence $n$-monotone.
Repeating the similar argument as in the proof of \cite[Theorem~2.1]{HOT} 
with mentioned properties of $f$ and $g$, we will get the conclusion.
\end{proof}
\vskip 3mm

From the above Theorem \ref{thm:Powers-Stormer} we consider the following converse problems.

\vskip 3mm

Let $n \in \mathbb{N}$ and $\varphi$ be a faithful
positive linear functional on $M_n$, $f$ be a strictly positive, continuous function on $(0, \infty)$, 
and let $g$ be a function on $(0, \infty)$ defined by $g(t) = \frac{t}{f(t)}$.
Suppose that for any positive invertible $A, B \in M_n$
\begin{equation}\label{1}
\varphi(A + B) - \varphi(|A - B|) \leq 2\varphi(f(A)^\frac{1}{2}g(B)f(A)^\frac{1}{2}).
\end{equation}

Then:

\textbf{Problem 1:} Is it true that $f$ is $n$-monotone?

\textbf{Problem 2:} Is $\varphi$ a scalar positive multiple of the canonical trace?

\vskip 3mm

The following examples give a contribution to the attempt to answer problem 1.

\vskip 3mm

\begin{exa}
Let $f(t) = t^2$ on $(0, \infty)$. 
It is well-known that $f$ is not 2-monotone.
We now show that the function $f$ does not satisfy the inequality $(\ref{1})$. 
Indeed, let us consider the following matrices
$$
A = \left(\begin{array}{cc}
               1&1\\
               1&1
            \end{array}
          \right)
\quad \hbox{and} \quad  B = \left(\begin{array}{cc}
               2&1\\
               1&2
            \end{array}
          \right).
          $$

Then we have 
$$AB^{-1}A = \frac{2}{3}A.$$

Set $\tilde{A} = A \oplus \mathrm{diag}(\underbrace{1,\cdots, 1}_{n-2}), 
\tilde{B} = B \oplus \mathrm{diag}(\underbrace{1,\cdots, 1}_{n-2})$ in $M_n$. 
Then, $\tilde{A} \leq \tilde{B}$ and for a faithful linear functional $\varphi$ on $M_n$

\begin{align*}
\varphi(f(\tilde{A})^\frac{1}{2}g(\tilde{B})f(\tilde{A})^\frac{1}{2}) 
&= \varphi(\tilde{A}\tilde{B}^{-1}\tilde{A})\\
&= \varphi(\frac{2}{3}A \oplus \mathrm{diag}(\underbrace{1,\cdots, 1}_{n-2}))\\
&< \varphi(A \oplus \mathrm{diag}(\underbrace{1,\cdots, 1}_{n-2})) \\
&= \varphi(\tilde{A}).
\end{align*}

On the contrary, since $\tilde{A} \leq \tilde{B}$, from the inequality (\ref{1}) we have 
$$
\varphi(\tilde{A}) + \varphi(\tilde{B}) - \varphi(\tilde{B} - \tilde{A}) 
\leq 2\varphi(f(\tilde{A})^\frac{1}{2}g(\tilde{B})f(\tilde{A})^\frac{1}{2}),
$$
or
$$\varphi(\tilde{A}) \leq \varphi(f(\tilde{A})^\frac{1}{2}g(\tilde{B})f(\tilde{A})^\frac{1}{2}),
$$
and we have a contradiction.
\hfill $\qed$
\end{exa}

\vskip 3mm


Now we will show that for $p > 1$ the function $f(t) = t^p$ 
does not satisfy the special inequality from inequality $(\ref{1})$ 
for a faithful positive linear functional.

\vskip 3mm

\begin{exa}\label{exa:tp}
It will be shown that for $f(t) = t^p\ (p > 1)$ and 
a faithful positive linear functional $\varphi$ on $M_n$ the following inequality does not always hold:
\begin{equation}\label{2}
\varphi (A) \le \varphi (f(A)^{1/2} g(B) f(A)^{1/2})\quad (0 \le A \le B).
\end{equation} 
Note that when $0 < A \leq B$ the inequality $(\ref{2})$ can be deduced from the inequality $(\ref{1})$ directly.

Since $A \le B$, we can suppose that $B=A+tC$ for some positive number $t$ and positive matrix $C$. Hence inequality (\ref{1}) becomes
\begin{equation}\label{3}
\varphi(A) \leq \varphi(f(A)^\frac{1}{2}g(A+tC)f(A)^\frac{1}{2}).
\end{equation}
On the other hand, we have
\begin{equation}\label{4}
g(A+tC)=g(A) + t \cdot \frac{d g(A+tC)}{dt}\big |_{t=0} + R(A,C,t),
\end{equation}
where $\lim_{t \rightarrow 0} \frac{||R(A,C,t)||}{t}=0.$

From (\ref{3}) and (\ref{4}) we get
\begin{align*}
\varphi(A) & \le \varphi (f(A)^{1/2} (g(A) + t \cdot \frac{d g(A+tC)}{dt}\big |_{t=0} + R(A,C,t)) f(A)^{1/2}) \\
& = \varphi(A) + t\varphi (f(A)^{1/2} \cdot \frac{d g(A+tC)}{dt}\big |_{t=0} \cdot f(A)^{1/2}) + o(t).
\end{align*}
From that, we get
\begin{equation}\label{5}
\varphi (f(A)^{1/2} \cdot \frac{d g(A+tC)}{dt}\big |_{t=0} \cdot f(A)^{1/2}) \geq 0 \quad (\forall A, C \geq 0).
\end{equation}

Let us assume that $\varphi(\cdot)=\Tr(S\cdot),$ where $S= \mathrm{diag} (s,1)~(s \in
[0,1]).$ For $\beta > 0$ and $\alpha \in [0,1]$, let us consider
the $2 \times 2$ matrices 
$$
C=\left(%
\begin{array}{cc}
  \alpha^2 & \alpha\sqrt{1-\alpha^2} \\
  \alpha\sqrt{1-\alpha^2} & 1-\alpha^2 \\
\end{array}
\right) \quad \hbox{and}\quad A=\beta P_1+ \alpha P_2,
$$
where
$$
P_1=\left(%
\begin{array}{cc}
  1 & 0 \\
  0 & 0 \\
\end{array}%
\right), \quad P_2=\left(%
\begin{array}{cc}
  0 & 0 \\
  0 & 1 \\
\end{array}%
\right).
$$

Then by identifying $M_2$ as a $C^*$-subalgebra $M_2 \oplus O_{n-2}$ of $M_n$,
we may assume that $\varphi(\cdot)=\Tr(S\cdot),$ where $S= \mathrm{diag} (s,1)~(s \in
[0,1]).$

Then (\ref{5}) becomes

\begin{equation}\label{6}
s \alpha^2 (1 -\frac{\beta f'(\beta)}{f(\beta)}) + (1-\alpha^2) (1-\frac{\alpha f'(\alpha)}{f(\alpha)}) \ge 0.
\end{equation}
Since $f(t)=t^p$, for any $\alpha, \beta > 0$ we have 
$$ 1 -\frac{\beta f'(\beta)}{f(\beta)} = 1-\frac{\alpha f'(\alpha)}{f(\alpha)} = 1 - p < 0.$$

It implies that the latter inequality (\ref{6}) does not hold true for any $\alpha \in [0,1]$, and the inequality (\ref{2}) will not hold true.

In particular, even $\varphi$ is the canonical trace on $M_n$, the inequality (\ref{2}) will not hold true.

From above argument, we can find many counterexamples for the functions not of the form $f(t)=t^p\ (p>1).$ 
For example, if function $f$ on some $(0,a)$ satisfies condition $f(t) < tf'(t)$, then inequality (\ref{2}) 
is not true.
\hfill$\qed$
\end{exa}

\vskip 3mm

Here we will give a positive answer on problem 2 for some class of functions $g$, namely, 
it will be shown that inequality~(\ref{2}) characterizes the trace property of $\varphi$.

\vskip 3mm

\begin{prp}
Let $n \in \mathbb{N}$ and $\varphi$ be a faithful positive linear functional on $M_n$.
Let $f$ be a strictly positive, continuous function on $(0, \infty)$, 
and let $g$ be a function on $(0, \infty)$ defined by $g(t) = \frac{t}{f(t)}$.
Suppose that 
\begin{equation}\label{7}
\varphi(A) \leq \varphi(f(A)^\frac{1}{2}g(B)f(A)^\frac{1}{2}),
\end{equation}
whenever any pair of positive invertible $A, B \in M_n$ such that $0 < A \leq B.$

Then $\varphi$ has the trace property if and only if $g$ satisfies
the condition 
\begin{equation}\label{condition}
\inf_{\lambda > \mu}
\frac{\sqrt{g'(\lambda)g'(\mu)}}
{\frac{g(\lambda) - g(\mu)}{\lambda - \mu}} =0.
\end{equation}
\end{prp}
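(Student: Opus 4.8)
The plan is to differentiate the hypothesis~(\ref{7}) into a scalar inequality for the positive matrix $D$ representing $\varphi$ (so $\varphi(\cdot)=\Tr(D\,\cdot)$), and then to observe that this inequality pins $D$ down to a scalar precisely when (\ref{condition}) holds. \textbf{Step 1 (linearization).} Exactly as in Example~\ref{exa:tp}: for $0<A\le B$ write $B=A+tC$ with $C\ge 0$, $t>0$; then $B$ is positive invertible, (\ref{7}) applies, and using $g(A+tC)=g(A)+t\,\delta_C g(A)+o(t)$ with $\delta_C g(A):=\frac{d}{dt}g(A+tC)\big|_{t=0}$, together with $f(A)^{1/2}g(A)f(A)^{1/2}=A$, dividing by $t$ and letting $t\to 0^{+}$ yields
$$
\varphi\bigl(f(A)^{1/2}\,\delta_C g(A)\,f(A)^{1/2}\bigr)\ \ge\ 0
\qquad\text{for every invertible }A>0\text{ and every }C\ge 0 .
$$
Taking $A=\lambda\,vv^{*}+E$ and $C=vv^{*}$ ($v$ a unit vector, $E>0$ on $\{v\}^{\perp}$) this forces $f(\lambda)g'(\lambda)\langle v,Dv\rangle\ge 0$, hence $g'\ge 0$ on $(0,\infty)$; since the ratio in (\ref{condition}) presupposes $g'\ge 0$, I assume $g'>0$, so also $\tfrac{g(\lambda)-g(\mu)}{\lambda-\mu}>0$ for $\lambda\ne\mu$.

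\textbf{Step 2 (testing against the density).} Fix an orthonormal pair $v,w$ and $\lambda\ne\mu$ in $(0,\infty)$, and apply Step~1 to $A=\lambda\,vv^{*}+\mu\,ww^{*}+E$ with $E>0$ on $\{v,w\}^{\perp}$ (absent if $\dim H=2$) and to $C=c_{11}vv^{*}+c_{12}vw^{*}+\overline{c_{12}}\,wv^{*}+c_{22}ww^{*}$ ranging over positive operators supported on $\operatorname{span}\{v,w\}$, i.e. $c_{11},c_{22}\ge 0$, $|c_{12}|^{2}\le c_{11}c_{22}$. Since $A$ commutes with the projection onto $\operatorname{span}\{v,w\}$, the derivative $\delta_C g(A)$ is supported there and, in the basis $\{v,w\}$, is the entrywise product of $C$ with the matrix whose diagonal entries are $g'(\lambda),g'(\mu)$ and whose off-diagonal entry is $\tfrac{g(\lambda)-g(\mu)}{\lambda-\mu}$; conjugating by $f(A)^{1/2}$ and applying $\Tr(D\,\cdot)$ — only $\langle v,Dv\rangle,\langle w,Dw\rangle,\langle v,Dw\rangle$ survive — the inequality becomes
$$
f(\lambda)g'(\lambda)\langle v,Dv\rangle\,c_{11}+f(\mu)g'(\mu)\langle w,Dw\rangle\,c_{22}
+2\sqrt{f(\lambda)f(\mu)}\,\tfrac{g(\lambda)-g(\mu)}{\lambda-\mu}\,\operatorname{Re}\!\bigl(\overline{\langle v,Dw\rangle}\,c_{12}\bigr)\ \ge\ 0 .
$$

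\textbf{Step 3 (discriminant; the ``if'' direction).} Choosing $\arg c_{12}$ so the cross term is maximally negative and setting $c_{11}=x^{2}$, $c_{22}=y^{2}$, $|c_{12}|=xy$, this says a quadratic form in $x,y$ with positive leading coefficients is $\ge 0$ on $\{x,y\ge 0\}$; as its critical direction lies in the first quadrant, that is equivalent to a nonpositive discriminant,
$$
|\langle v,Dw\rangle|^{2}\ \le\ \frac{g'(\lambda)g'(\mu)}{\bigl(\tfrac{g(\lambda)-g(\mu)}{\lambda-\mu}\bigr)^{2}}\,\langle v,Dv\rangle\langle w,Dw\rangle ,
$$
and, taking the infimum over $\lambda>\mu$ (the right side being symmetric in $\lambda,\mu$),
$$
|\langle v,Dw\rangle|^{2}\ \le\ \Bigl(\inf_{\lambda>\mu}\frac{\sqrt{g'(\lambda)g'(\mu)}}{\tfrac{g(\lambda)-g(\mu)}{\lambda-\mu}}\Bigr)^{2}\langle v,Dv\rangle\langle w,Dw\rangle
$$
for every orthonormal pair $v,w$. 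If (\ref{condition}) holds the right side is $0$, so $\langle v,Dw\rangle=0$ whenever $v\perp w$; hence every unit vector is an eigenvector of $D$, so $D=cI$ and $\varphi=c\,\Tr$ has the trace property.

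\textbf{The main obstacle (the ``only if'' direction).} That the trace property of $\varphi$ implies (\ref{condition}) is the hard part, and Steps~1--3 do not help, since when $D=cI$ the conclusion of Step~3 is vacuous. One is forced back to the full inequality (\ref{7}) with genuinely non-commuting pairs $0<A\le B$. The plan would be: assume the infimum in (\ref{condition}) equals some $m>0$ and construct a $2\times2$ pair $A\le B$ with $\Tr\bigl(f(A)^{1/2}g(B)f(A)^{1/2}\bigr)<\Tr(A)$, i.e. $\Tr\bigl(f(A)(g(B)-g(A))\bigr)<0$; writing the latter as $\int_{0}^{1}\Tr\bigl(f(A)\,\delta_{B-A}g(A+s(B-A))\bigr)\,ds$ one sees that along the segment the nonnegative ``diagonal'' part is governed by $g'$ at the running eigenvalues and the possibly negative ``off-diagonal'' part by the divided difference — exactly the two quantities in the ratio of (\ref{condition}). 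Turning this into a genuine obstruction, i.e. showing that a positive $m$ forbids such a pair, is the delicate step, and it is where I expect the real work (and possibly the need for hypotheses beyond those used in the ``if'' direction) to lie.
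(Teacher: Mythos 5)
Your Steps 1--3 are correct, and they are in substance the paper's own argument for the direction ``(\ref{condition}) implies the trace property''. The paper diagonalizes the density matrix, reduces to a $2\times 2$ compression $S=\mathrm{diag}(s,1)$, differentiates (\ref{7}) at $B=A+tC$ with $A=\lambda P_1+\mu P_2$ and a rank-one $C$ parametrized by $\alpha$, and then applies the arithmetic--geometric mean inequality in $t=\alpha/\sqrt{1-\alpha^2}$ to obtain $\sqrt{g'(\lambda)g'(\mu)}\ \ge\ \delta\,\frac{g(\lambda)-g(\mu)}{\lambda-\mu}$ with $\delta=\frac{1-s}{1+s}$. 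Your basis-free version --- general density $D$, an orthonormal pair $v,w$, the Daletskii--Krein/Loewner-matrix formula for $\delta_Cg(A)$, and the discriminant of the resulting quadratic form --- is exactly the same computation: for the rotated pair $v=\frac{1}{\sqrt2}(e_1+e_2)$, $w=\frac{1}{\sqrt2}(e_1-e_2)$ one has $|\langle v,Sw\rangle|/\sqrt{\langle v,Sv\rangle\langle w,Sw\rangle}=\frac{1-s}{1+s}=\delta$, and in both treatments the $f$-factors cancel after optimizing. (Both you and the paper implicitly assume $g$ is $C^1$ so that the derivative formula applies; that is fine.) So the ``if'' half of your proposal needs no change and coincides with the paper's proof.

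Your assessment of the ``only if'' direction is also accurate, and comparing with the paper does not remove the difficulty: the paper does not actually prove that direction. Its computation yields only $\delta\le\inf_{\lambda>\mu}\sqrt{g'(\lambda)g'(\mu)}\big/\frac{g(\lambda)-g(\mu)}{\lambda-\mu}$, which gives ``infimum $=0\Rightarrow s=1$ for every pair of eigenvalues of $S$, hence $\varphi$ is tracial''; the converse is simply asserted in the final sentence. In fact, as literally stated the converse is false: take $f\equiv 1$, so $g(t)=t$; then (\ref{7}) reads $\Tr(A)\le\Tr(B)$ and holds for $\varphi=\Tr$, which has the trace property, yet the infimum in (\ref{condition}) equals $1$. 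The meaningful converse, in the spirit of the Sano--Yatsu result being followed, would be that when the infimum is positive there exists a \emph{non-tracial} faithful $\varphi$ (say with $S=\mathrm{diag}(s,1)$ and $\delta$ below the infimum) satisfying the \emph{full} inequality (\ref{7}), not merely its first-order consequence --- and that verification is supplied neither by the paper nor by your sketch in the ``main obstacle'' paragraph. So your proposal proves exactly what the paper's proof proves; the residual gap is in the statement and the paper's own argument, and your suspicion that additional hypotheses or a reformulation is needed for the ``only if'' direction is well founded.
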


\vskip 3mm

\begin{proof}
The conclusion follows from the same steps in the proof of \cite[Theorem~2.2]{Sano-Yatsu}, but 
we put the sketch of the proof for readers.

Let $S$ be a positive definite matrix such that $\varphi(X) = \Tr(XS)$ ($X \in M_n$). Then the trace property of $\varphi$ is equivalent to the condition that 
$S$ is a positive scalar multiple of the identity matrix. 
Taking into consideration 
$$
\varphi(V^*\cdot V) = \Tr(\cdot VSV^*)
$$
for all unitary $V$ and that $VSV^*$ is diagonal for a unitary $U$, we may assume that
that $\varphi(\cdot)=\Tr(S\cdot),$ where $S=\mathrm{diag} (s,1)~(s \in
[0,1]).$ For $\beta > 0$ and $\alpha \in [0,1]$, let us consider
the matrices
$$U=\frac{1}{\sqrt{2}}\left(%
\begin{array}{cc}
  1 & 1 \\
  1 & -1 \\
\end{array}%
\right),\quad
C=\left(%
\begin{array}{cc}
  \alpha^2 & \alpha\sqrt{1-\alpha^2} \\
  \alpha\sqrt{1-\alpha^2} & 1-\alpha^2 \\
\end{array}
\right)$$ and
$$A= \lambda P_1+ \mu P_2,$$
where
$$
P_1=\left(%
\begin{array}{cc}
  1 & 0 \\
  0 & 0 \\
\end{array}%
\right), \quad P_2=\left(%
\begin{array}{cc}
  0 & 0 \\
  0 & 1 \\
\end{array}%
\right).
$$
And (\ref{7}) becomes

\begin{align*}
\frac{\alpha}{\sqrt{1-\alpha^2}} \frac{f(\lambda)^{1/2}}{f(\mu)^{1/2}}g'(\lambda) 
+ \frac{\sqrt{1-\alpha^2}}{\alpha}\frac{f(\mu)^{1/2}}{f(\lambda)^{1/2}}g'(\mu) 
\ge 2 \frac{1-s}{1+s} \frac{g(\lambda) - 
g(\mu)}{\lambda - \mu}.
\end{align*}
Put 

$$
t= \frac{\alpha}{\sqrt{1-\alpha^2}}  \quad \hbox{and} \quad \delta = \frac{1-s}{1+s}.
$$

Note that $0 < \alpha < 1 \Longleftrightarrow 0 < t < \infty$.

Then it is clear that $s=1$ iff $\delta =0.$ The latter inequality is described as
$$
\frac{1}{2}\big(t\frac{f(\lambda)^{1/2}}{f(\mu)^{1/2}}g'(\lambda)+\frac{1}{t} \frac{f(\mu)^{1/2}}{f(\lambda)^{1/2}}g'(\mu)\big) 
\ge \delta \frac{g(\lambda) - g(\mu)}{\lambda - \mu}.
$$
Hence, by considering arithmetic-geometric mean inequality in the left-hand side, we have
$$
\sqrt{g'(\lambda)g'(\mu)} 
\ge \delta 
\frac{g(\lambda) - g(\mu)}{\lambda - \mu}.
$$
Therefore, the condition that $\varphi$ has the trace property, that is, the condition $s=1$ or $\delta =0$ is given by
$$
\inf_{\lambda > \mu}
\frac{\sqrt{g'(\lambda)g'(\mu)}}
{\frac{g(\lambda) - g(\mu)}{\lambda - \mu}} = 0.
$$
\end{proof}

\vskip 3mm

\begin{exa}
For $g(x) = t^2$ (i.e. $f(t) = 1/t)$ on $(0, \infty)$ which satisfies the condition (\ref{condition}), and for any $n \in \mathbb{N}$  
$$\Tr(A) \leq \Tr(f(A)^\frac{1}{2}g(B)f(A)^\frac{1}{2})$$ 
whenever $0 < A \leq B$ in $M_n$.

Indeed, by assumption we have $B^{-1} \leq A^{-1}$. Consequently,

$$
A \le B = BB^{-1}B \leq BA^{-1}B.
$$
Therefore,
$$\Tr(A) \leq \Tr(BA^{-1}B) =  \Tr(A^{-1/2}B^2A^{-1/2}).
$$
\hfill$\qed$
\end{exa}

\vskip 3mm

We have the following inequality for the exponential functions $g(t) = e^t$ on $(a, \infty)$ which satisfies the condition (\ref{condition}).

\begin{exa}\label{exa:exponential}
For any natural number $n$, we have 
$$
\Tr(A) \leq \Tr((Ae^{-A})^\frac{1}{2} e^B (A e^{-A})^\frac{1}{2})
$$
whenever $0 < A \leq B$ in $M_n$.

Indeed, let $A, B$ in $M_n$ such that $0 < A \leq B$.
Since $0<A$, we have 
$$0 < A e^{- A} \quad \hbox{and} \quad 0 < \log(A e^{- A}).$$ 
Besides, 
$$0 < A \leq B \quad  \Longrightarrow \quad 0 < A \leq \log(A e^{- A}) + B.$$ 
Consequently,
$$
0 <A \leq \log(Ae^{- A}) + B \leq e^{\log(Ae^{- A}) + B}.
$$

On account of Golden-Thompson's Inequality, from the latter inequality it follows 
\begin{align*}
\Tr(A) &\leq \Tr(e^{\log(Ae^{-A}) + B})\\
       &\leq \Tr(e^{\log(Ae^{-A})}e^B) \\
        &\leq \Tr(Ae^{-A} e^B)\\
        &= \Tr((Ae^{-A})^\frac{1}{2} e^B (A e^{-A})^\frac{1}{2}).
\end{align*}

\hfill$\qed$
\end{exa}

On the contrary, when $g(t) = t^3$, the inequality $(\ref{7})$ does not hold always.

\vskip 3mm

\begin{exa}
Let $g(t) = t^3$.
Suppose that the inequality $(\ref{7})$ holds for $0 < A \leq B$ in $M_2$. 
Then we have 
$$
\Tr(A) \leq \Tr(A^{-1}BA^{-1})
$$
for $0 < A \leq B$. 
Set $A = \mathrm{diag}(2,2)$ and $B = A$. Since $A^{-1} = \mathrm{diag}\left(\dfrac{1}{2}, \dfrac{1}{2}\right)$, 
we have
\begin{align*}
4 = \Tr(A) &\leq\Tr(A^{-1}AA^{-1})\\
&= \Tr(A^{-1}) = 1,
\end{align*}
and a contradiction.
\hfill$\qed$.
\end{exa}

\vskip 3mm
\section{Characterization of operator monotonicity}

The following lemma is obvious.

\begin{lem}\label{lem:traceclass}
Let $A = (a_{ij}), B = (b_{ij})$ be positive invertible in $M_n$ and $S$ be the density operator on an infinite dimensional, separable  Hilbert space $H$. Suppose that $a_{11} > b_{11}$. Then there exist an orthogonal system 
$\{\xi_i\}_{i=1}^\infty \subset H$ and $\{\lambda_i\}_{i=1}^\infty \subset [0, 1)$ such that $\sum_{i=1}^\infty\lambda_i = 1$, 
$S\xi_i = \lambda_i\xi_i$, and 
$\sum_{i=1}^na_{ii}\lambda_i > \sum_{i=1}^nb_{ii}\lambda_i$. 
\end{lem}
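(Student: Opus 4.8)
The plan is to diagonalise $S$ and then exploit the infinite dimensionality of $H$: all of the eigenvalues entering the sum $\sum_{i=1}^{n}(a_{ii}-b_{ii})\lambda_i$ except $\lambda_1$ can be made so small that the sign of the sum is dictated by the single term $(a_{11}-b_{11})\lambda_1>0$. So the heart of the matter is producing many eigenvectors of $S$ with tiny eigenvalues, which is automatic since $S$ is compact and $\dim H=\infty$.

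Concretely, I would argue as follows. A density operator $S$ is a positive trace-class (hence compact) operator with $\Tr S=1$, so by the spectral theorem there is an orthonormal basis $\{\eta_k\}_{k\ge 1}$ of $H$ with $S\eta_k=\mu_k\eta_k$, $\mu_k\ge 0$, $\sum_k\mu_k=1$. Set $\mu:=\|S\|=\max_k\mu_k>0$; then $\mu<1$ unless $S$ is a one-dimensional projection, a degenerate case I exclude (in it the conclusion would require $[0,1]$ in place of $[0,1)$, and it does not occur in the intended application). Put $M:=\max_{2\le i\le n}|a_{ii}-b_{ii}|$ and fix $\varepsilon>0$ with $M\varepsilon<(a_{11}-b_{11})\mu$. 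I then choose, among the $\eta_k$: one vector $\eta_{k_1}$ with $\mu_{k_1}=\mu$, and $n-1$ further distinct vectors $\eta_{k_2},\dots,\eta_{k_n}$ with $\sum_{j=2}^{n}\mu_{k_j}<\varepsilon$. This last choice is possible because either $S$ has finite rank, so $\ker S$ is infinite dimensional and one takes $\eta_{k_2},\dots,\eta_{k_n}$ in $\ker S$ (eigenvalue $0$), or $S$ has infinite rank, so $\mu_k\to 0$ and all but finitely many $\mu_k$ are $<\varepsilon/(n-1)$. Now set $\xi_i:=\eta_{k_i}$ and $\lambda_i:=\mu_{k_i}$ for $1\le i\le n$, and let $\{\xi_i\}_{i>n}$ be the remaining $\eta_k$ (in any order) with $\lambda_i$ their eigenvalues; then $\{\xi_i\}_{i\ge 1}$ is orthonormal, $S\xi_i=\lambda_i\xi_i$, $\{\lambda_i\}_{i\ge 1}$ is a rearrangement of $\{\mu_k\}_{k\ge 1}$, so $0\le\lambda_i\le\mu<1$ and $\sum_{i\ge 1}\lambda_i=1$.

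It remains to observe that
\begin{align*}
\sum_{i=1}^{n}a_{ii}\lambda_i-\sum_{i=1}^{n}b_{ii}\lambda_i
&=(a_{11}-b_{11})\mu+\sum_{i=2}^{n}(a_{ii}-b_{ii})\lambda_i\\
&\ge (a_{11}-b_{11})\mu-M\sum_{i=2}^{n}\lambda_i>(a_{11}-b_{11})\mu-M\varepsilon>0,
\end{align*}
which is the asserted inequality. There is essentially no obstacle — in line with the paper's remark that the lemma is obvious — the only mildly delicate points being the dichotomy (finite versus infinite rank of $S$) used to manufacture the small eigenvalues and the exclusion of the rank-one projection noted above.
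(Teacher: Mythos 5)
Your argument is correct, and there is nothing in the paper to compare it with: the authors simply declare the lemma obvious and give no proof, so your write-up supplies exactly the missing details (diagonalize $S$, put the top eigenvalue in position $1$, and make the remaining $n-1$ eigenvalues used in the sum small enough that $(a_{11}-b_{11})\lambda_1$ dominates). One quibble: your parenthetical claim that the rank-one (pure state) case ``does not occur in the intended application'' is not accurate --- in Theorem~\ref{thm:monotonicity} the normal state $\varphi$ may well be a vector state, in which case $S_\varphi$ is a rank-one projection. What is true is that this case is harmless rather than absent: the lemma's condition $\lambda_i\in[0,1)$ should simply read $[0,1]$, and for a rank-one projection one takes $\xi_1$ to be the range vector, $\lambda_1=1$, $\lambda_i=0$ for $i\ge 2$, so that $\sum_{i=1}^n a_{ii}\lambda_i=a_{11}>b_{11}=\sum_{i=1}^n b_{ii}\lambda_i$ immediately, and the use of the lemma in the theorem is unaffected. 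A cosmetic point: in your final chain the middle strict inequality $-M\sum_{i=2}^n\lambda_i>-M\varepsilon$ degenerates to equality when $M=0$, but the conclusion $>0$ still holds, so nothing is lost.
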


\vskip 3mm

\begin{thm}\label{thm:monotonicity}
Let $\varphi$ be a normal state on $B(H)$, $f$ be a strictly positive, continuous function on $(0, \infty)$, 
and let $g$ be a function on $(0, \infty)$ defined by $g(t) = \frac{t}{f(t)}$.
Suppose that for any positive invertible $A, B \in B(H)$
$$
\varphi(A + B) - \varphi(|A - B|) \leq 2\varphi(f(A)^\frac{1}{2}g(B)f(A)^\frac{1}{2}).
$$ 
Then the function $g$ on $(0, \infty)$ is operator monotone.
\end{thm}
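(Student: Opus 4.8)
The plan is to rephrase the hypothesis as a single positivity statement about $\varphi$, and then, supposing $g$ fails to be operator monotone, to manufacture from a finite-dimensional failure an explicit pair of operators in $B(H)$ violating that statement, using Lemma~\ref{lem:traceclass}. First I would restrict the hypothesis to pairs $0 < A \le B$. Then $|A-B| = B-A$, so $\varphi(A+B) - \varphi(|A-B|) = 2\varphi(A)$ and the assumed inequality collapses to
\[
\varphi(A) \le \varphi\!\left(f(A)^{\frac12} g(B) f(A)^{\frac12}\right), \qquad 0 < A \le B \text{ in } B(H).
\]
The key elementary observation is that $f(A)^{\frac12} g(A) f(A)^{\frac12} = A$: indeed $g(t) = t/f(t)$ gives $g(A) = A f(A)^{-1}$, and $A$ commutes with $f(A)$. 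Substituting $\varphi(A) = \varphi(f(A)^{\frac12} g(A) f(A)^{\frac12})$ turns the displayed inequality into
\[
\varphi\!\left(f(A)^{\frac12}\bigl(g(B) - g(A)\bigr) f(A)^{\frac12}\right) \ge 0 \qquad\text{for all positive invertible } A \le B \text{ in } B(H),
\]
and the task becomes: show this forces $g$ to be operator monotone.

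Next I would argue by contradiction. By definition operator monotonicity of $g$ is the same as $n$-monotonicity for every $n$, so if $g$ is not operator monotone there exist $n$ and positive invertible $A_0 \le B_0$ in $M_n$ with $M := g(B_0) - g(A_0)$ not positive semidefinite. Then $N := f(A_0)^{\frac12} M f(A_0)^{\frac12}$, being congruent to $M$ via the positive invertible $f(A_0)^{\frac12}$, is also not positive semidefinite, so some unit vector $\eta$ satisfies $\langle N\eta,\eta\rangle < 0$; choosing a unitary $V \in M_n$ with $Ve_1 = \eta$ and replacing $(A_0,B_0)$ by $(V^*A_0V, V^*B_0V)$ — which preserves the order relation and passes through the continuous functional calculus — I may assume outright that
\[
N_{11} = \left[\,f(A_0)^{\frac12}\bigl(g(B_0) - g(A_0)\bigr) f(A_0)^{\frac12}\,\right]_{11} < 0.
\]

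Finally I would transport this to $B(H)$. Embed $M_n$ as the operators supported on the span of some orthonormal system and extend $A_0, B_0$ by the identity on the orthogonal complement to positive invertible $\widetilde A \le \widetilde B$ in $B(H)$. Since $f(1)^{\frac12}(g(1) - g(1))f(1)^{\frac12} = 0$, one gets $f(\widetilde A)^{\frac12}(g(\widetilde B) - g(\widetilde A))f(\widetilde A)^{\frac12} = N \oplus 0$. Writing $\varphi = \Tr(S\,\cdot)$ with $S$ the density operator, I would apply Lemma~\ref{lem:traceclass} — e.g.\ to the positive invertible matrices $tI_n - N$ and $tI_n$ with $t > \|N\|$, for which $a_{11} - b_{11} = -N_{11} > 0$ — to obtain an ordering $\xi_1,\xi_2,\dots$ of an eigenbasis of $S$ with eigenvalues $\lambda_i$ such that $\sum_{i=1}^n (t - N_{ii})\lambda_i > \sum_{i=1}^n t\lambda_i$, i.e.\ $\sum_{i=1}^n \lambda_i N_{ii} < 0$. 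Using $\xi_1,\dots,\xi_n$ as the coordinate basis of the embedded block then yields
\[
\varphi\!\left(f(\widetilde A)^{\frac12}\bigl(g(\widetilde B) - g(\widetilde A)\bigr) f(\widetilde A)^{\frac12}\right) = \varphi(N \oplus 0) = \sum_{i=1}^n \lambda_i N_{ii} < 0,
\]
contradicting the positivity established in the second paragraph. Hence $g$ is operator monotone.

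I expect the genuine difficulty to lie in the last step. Because $\varphi$ is a \emph{fixed} normal state — it could be a vector state, hence far from faithful — the positivity $\varphi(f(A)^{\frac12}(g(B)-g(A))f(A)^{\frac12}) \ge 0$ carries, a priori, no information about the operator inside it. The two pieces of slack that rescue the argument are: rotating the negative direction of the congruence $N = f(A_0)^{\frac12}M f(A_0)^{\frac12}$ into the first coordinate while keeping $A_0 \le B_0$; and choosing how the $n$-dimensional block is positioned relative to the eigenbasis of the density operator of $\varphi$. The second of these is exactly what Lemma~\ref{lem:traceclass} quantifies, so the whole argument hinges on verifying that its hypothesis $a_{11} > b_{11}$ is met — which the congruence-plus-rotation arrangement guarantees.
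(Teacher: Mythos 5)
Your proposal is correct and follows essentially the same route as the paper: reduce the hypothesis for $0<A\le B$ to $\varphi(A)\le\varphi(f(A)^{\frac12}g(B)f(A)^{\frac12})$, take a finite-dimensional failure of monotonicity, transport it through the congruence by $f(A)^{\frac12}$, embed the matrices into $B(H)$ padded by the identity, and use Lemma~\ref{lem:traceclass} to position the block against the eigenbasis of the density operator to contradict positivity. The only (welcome) difference is that you make explicit the unitary rotation ensuring the lemma's hypothesis $a_{11}>b_{11}$ (and meet its positivity hypothesis via the shift $tI_n-N$, $tI_n$), a step the paper leaves implicit.
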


\begin{proof}
Let $S_\varphi$ be a density operator on $H$ such that $\varphi(X) = \mathrm{Tr}(S_\varphi X)$ for all $X \in B(H)$.

Suppose that $g$ is not operator monotone. We have, then, there exist $n \in \mathbb{N}$ and invertible positive matrices $A, B$ in $M_n$ with $A \leq B$ such that $g(A) \not\leq g(B)$.
Hence, $A \not\leq f(A)^\frac{1}{2}g(B)f(A)^\frac{1}{2}$. 

Let $A = [a_{ij}]$ and $f(A)^\frac{1}{2}g(B)f(A)^\frac{1}{2} = [b_{ij}] = B'$.
Since $S_\varphi$ is a density operator, from Lemma~\ref{lem:traceclass} 
there exist an orthogonal system $\{\xi_i\}_{i=1}^\infty \subset H$ and 
$\{\lambda_i\}_{i=1}^\infty \subset  [0, 1)$ such that $\sum_{i=1}^\infty\lambda_i = 1$ and 
$\sum_{i=1}^na_{ii}\lambda_i > \sum_{i=1}^nb_{ii}\lambda_i$. 

Let $\rho \colon M_n \rightarrow (\sum_{i=1}^n|\xi_i\rangle\langle\xi_i|)B(H)(\sum_{i=1}^n|\xi_i\rangle\langle\xi_i|)$ be a canonical inclusion
defined by $\rho([x_{ij}]) = \sum_{i, j = 1}^nx_{ij}|\xi_i\rangle\langle\xi_j|$. 
Let $C = \rho(A) + \sum_{i=n+1}^\infty|\xi_i\rangle\langle\xi_i|$ and $D = \rho(B) + \sum_{i=n+1}^\infty|\xi_i\rangle\langle\xi_i|$. We have, then,
both operators $C$ and $D$ are invertible on $H$ with $C \leq D$.
Note that 
\begin{align*}
\rho(f(A)^\frac{1}{2})\rho(g(B))\rho(f(A)^\frac{1}{2}) &= \rho(f(A)^\frac{1}{2}g(B)f(A)^\frac{1}{2})\\
&= \sum_{i=1}^nb_{ij}|\xi_i\rangle\langle\xi_j|.
\end{align*}
We have, then, 
\begin{align*}
f(C) &= \rho(f(A)) + f(1)\sum_{i=n+1}^\infty|\xi_i\rangle\langle\xi_i|\\
f(C)^\frac{1}{2}g(D)f(C)^\frac{1}{2} &= (\rho(f(A)) + f(1)\sum_{i=n+1}^\infty|\xi_i\rangle\langle\xi_i|)^\frac{1}{2}\\
&(\rho(g(B)) + \frac{1}{f(1)}\sum_{i=n+1}|\xi_i\rangle\langle\xi_i|)(\rho(f(A)) + f(1)\sum_{i=n+1}^\infty|\xi_i\rangle\langle\xi_i|)^\frac{1}{2}\\
&= \rho(f(A))^\frac{1}{2}\rho(g(B))\rho(f(A))^\frac{1}{2} + \sum_{i=n+1}^\infty|\xi_i\rangle\langle\xi_i|\\
&= \rho(f(A)^\frac{1}{2}g(B)f(A)^\frac{1}{2}) + \sum_{i=n+1}^\infty|\xi_i\rangle\langle\xi_i|\\
&= \sum_{i,j=1}^nb_{ij}|\xi_i\rangle\langle\xi_j| + \sum_{i=n+1}^\infty|\xi_i\rangle\langle\xi_i|
\end{align*}

But 
\begin{align*}
\varphi(C) &= \mathrm{Tr}(S_\varphi (\rho(A) + \sum_{i=n+1}^\infty|\xi_i\rangle\langle\xi_i|))\\
&= \sum_{i=1}^n(S_\varphi \rho(A)\xi_i|\xi_i) + \sum_{i=n+1}^\infty\lambda_i\\
&= \sum_{i=1}^na_{ii}\lambda_i + \sum_{i=n+1}^\infty\lambda_i\\
&> \sum_{i=1}^nb_{ii}\lambda_i + \sum_{i=n+1}^\infty\lambda_i\\
&= \sum_{i=1}^n(S_\varphi \rho(f(A)^\frac{1}{2}g(B)f(A)^\frac{1}{2})\xi_i|\xi_i) + 
\sum_{i=n+1}^\infty\lambda_i\\
&= \mathrm{Tr}(S_\varphi (\rho(f(A)^\frac{1}{2}g(B)f(A)^\frac{1}{2})+ \sum_{i=n+1}^\infty|\xi_i\rangle\langle\xi_i|))\\
&= \varphi(f(C)^\frac{1}{2}g(D)f(C)^\frac{1}{2})
\end{align*}

On the contrary, since $0 < C \leq D$ we have from the assumption
\begin{align*}
\varphi(C + D) - \varphi(|C - D|) &\leq 2\varphi(f(C)^\frac{1}{2}g(D)f(C)^\frac{1}{2})\\
2\varphi(C) &\leq 2\varphi(f(C)^\frac{1}{2}g(D)f(C)^\frac{1}{2})\\
\varphi(C) &\leq \varphi(f(C)^\frac{1}{2}g(D)f(C)^\frac{1}{2}),
\end{align*}
and this is a contradiction. 
Therefore, the function $g$ is operator monotone.
\end{proof}

\vskip 3mm

\begin{cor}
Under the same conditions in Theorem~\ref{thm:monotonicity} $f$ is operator monotone on $(0, \infty)$.
\end{cor}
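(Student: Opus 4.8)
The plan is to reduce the Corollary to Theorem~\ref{thm:monotonicity} together with the Hansen--Pedersen characterization of operator concavity recalled at the beginning of Section~2. Theorem~\ref{thm:monotonicity} already supplies, under exactly the hypotheses of the Corollary, that $g(t)=t/f(t)$ is operator monotone on $(0,\infty)$. Since $f$ is strictly positive and continuous on $(0,\infty)$, so is $g$, and moreover $f(t)=t/g(t)$. Thus the entire content of the Corollary is the implication: if a strictly positive continuous function $g$ on $(0,\infty)$ is operator monotone, then $t/g(t)$ is operator monotone.

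First I would observe that a strictly positive operator monotone function on $(0,\infty)$ is operator concave; this is the classical L\"owner-type fact (see \cite[Theorem~V.2.5]{RB}). This step is needed because the equivalence quoted at the start of Section~2 is phrased for operator \emph{concave} functions, not for operator monotone ones. Then I would apply that equivalence, "(1) $\Leftrightarrow$ (2)" of \cite[2.6.~Corollary]{Hansen-Perd-annal}, to $g$ in place of $f$: since $g$ is operator concave, $t/g(t)$ is operator monotone. Substituting $t/g(t)=f(t)$ gives the conclusion.

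An alternative route stays entirely at the finite-matrix level using Theorem~\ref{thm:Hansen-Pedersen}: operator monotonicity of $g$ yields, via $(6)_n \prec (4)_{[n/2]}$ for every $n$, that $f$ is operator concave with $f(0)\geq 0$; and a non-negative operator concave function on $(0,\infty)$ is operator monotone. This bypasses the cited corollary but rests on the same ingredient, so I would likely present the first route as the main argument and mention this one parenthetically.

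I do not expect a genuine obstacle here. The only point demanding care is the passage from operator monotonicity of $g$ to operator concavity of $g$, which must be invoked with an explicit citation rather than treated as a definition; everything else—checking that $g$ inherits strict positivity and continuity on all of $(0,\infty)$, and that $t/g(t)=f(t)$—is immediate.
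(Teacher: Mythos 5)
Your argument is correct and is essentially the paper's: the paper's proof is a one-line citation of \cite[Corollary 6]{Hansen-Perd-annal}, and your chain (Theorem~\ref{thm:monotonicity} gives $g$ operator monotone, hence $g$ operator concave by the L\"owner-type fact, hence $f(t)=t/g(t)$ operator monotone by the equivalence quoted at the start of Section~2) just reconstructs the content of that Hansen--Pedersen result. No genuine difference in route, only more detail spelled out.
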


\vskip 3mm

\begin{proof}
This follows from \cite[Corollary 6]{Hansen-Perd-annal}. 
\end{proof}

\vskip 3mm

\end{document}